\title[Almost all 1-related groups are residually finite]{Almost all one-relator groups with at least three  generators are residually finite}
\author{Mark Sapir}
\address{Department of Mathematics, Vanderbilt University}
\email{mark.sapir@vanderbilt.edu}
\urladdr{www.math.vanderbilt.edu/$\sim$msapir}
\author{Iva \v Spakulov\' a}
\address{Department of Mathematics, Vanderbilt University}
\email{iva.kozakova@vanderbilt.edu}
\thanks{The work
was  supported in part by the NSF grant DMS 0700811 and by a BSF
(USA-Israeli) grant.}
\long\def\comment#1\endcomment{}
\newtheorem{theorem}{Theorem}[section]
\newtheorem{lemma}[theorem]{Lemma}
\theoremstyle{definition}
\newtheorem{definition}[theorem]{Definition}
\newtheorem{remark}[theorem]{Remark}
\newtheorem{question}[theorem]{Question}
\def\Z{{\mathbb Z}}
\def\R{{\mathbb R}}
\def\P{{\mathrm P}}
\begin{document}

\begin{abstract}
We prove that with probability tending to 1, a 1-relator group with at least 3 generators and the relator
of length $n$ is residually finite, virtually residually (finite $p$)-group for all
sufficiently large $p$, and coherent. The proof uses both combinatorial group
theory and non-trivial results about Brownian motions.
\end{abstract}

\maketitle

\section{Introduction}

Residual finiteness of 1-related groups is one of the main topics in
combinatorial group theory since 1960s. The first non-residually
finite examples  were given in \cite{BaumSol} (say, the Baumslag--Solitar
groups $BS(p,q)=\langle a, b\mid b^{-1}a^pb=b^q\rangle $
where $p$ and $q$ are different primes). Possibly the strongest
``positive'', non-probabilistic result so far is the result by D. Wise
\cite{Wise}: any one-relator group whose relator is a {\em positive word}
satisfying the condition $C\sp\prime (1/6)$ is residually finite.
The strongest ``negative" result appeared recently in the paper by Baumslag, Miller and Traeger
\cite{BMT}: Let $G=\langle a,b,\ldots\vert r=1\rangle$ be a
one-relator group with at least two generators and let
$G(r,w)=\langle a,b,\ldots\vert r\sp{r\sp w}=r\sp2\rangle$ where $w$
is an element of a free group with free generators $a,b,\dots$ that
does not commute with $r$. Then \cite[Theorem 1]{BMT} asserts that
the group $G(r,w)$ is not residually finite. Note that the length of
the relator of $G(r,w)$ is at most a constant multiple of the length
of the relator of $G$.

Another important problem about 1-related groups is whether every 1-related group is {\em coherent}, i.e. every finitely generated subgroup of it is finitely presented.

In this paper, we show that generically 1-related groups with at least 3 generators satisfy both properties: they are residually finite and coherent. We also discuss the case of 1-related groups with 2 generators at the end of the paper. 

We consider three natural models of choosing a random 1-related group.

{\bf Model NR}. For every $r\ge 0$, consider the set $S_r$ of all group words $R$ of length $r\ge 1$ in a free group $F_k=\langle x_1,...,x_k\rangle $. On that set, we choose the uniform probability measure.
By a {\em random 1-related group with $k$ generators of complexity $r$} we mean the group with presentation $\langle x_1,...,x_k\mid R=1 \rangle$ where $R$ is a random word from $S_r$.

{\bf Model CR}. In this model, we consider the set $CS_r$ of cyclically reduced words in $F_k$ of length $r$ and consider the uniform probability measure on that set. Then a {\em random 1-related group with $k$ generators of complexity $r$}  is a group $\langle x_1,...,x_k\mid R \rangle$ where $R$ is a random word from $CS_r$.

The third model is one considered, for example, in Kapovich-Schupp-Shpilrain \cite{KSS}.

{\bf Model IC}. Consider the equivalence relation on the set of all cyclically reduced words from $F_k$: two words are equivalent if the corresponding 1-related groups are isomorphic. In this model we consider any set $D_r$
of representatives of length exactly $r$ of all equivalence classes of words containing words of length $r$.
Consider the uniform probability measure on $D_r$. Then by a {\em a random 1-related group with $k$ generators of complexity $r$} we mean the group with presentation $\langle x_1,...,x_k\mid R=1 \rangle$ where $R$ is a random word from $D_r$.

Now given any property $P$ of groups and any of the three probabilistic models above,
consider the probability $p_r$ that a random $k$-generator
$1$-relator group of with relator of length $r$ has property $P$. If $p_r$ has a
limit $p$, we say that a random $k$-generator $1$-relator group has
this property with probability $p$.

We prove below (see
Lemmas \ref{lem2}, \ref{lem3}) that if the limit of probabilities $p_r$ exists in
the random model CR, then it coincides with the limit in
model NR and IC.

Recall that an ascending HNN extension of a free group (mapping torus of free group endomorphism) is a group of the form HNN$_\phi(F_k)=\langle x_1,...,x_k,t\mid x_1^t=\phi(x_1),...,x_k^t=\phi(x_k)\rangle$ where $\phi$ is an injective homomorphism of $F_k$.

Here is the main result of this paper. 

\begin{theorem}\label{thm3} A random $k$-generator $1$-relator group, $k\geq 3$,
can be embedded into an ascending HNN extension of a finitely
generated free group with probability $1$. Therefore almost
surely, such a group is residually finite, virtually residually
(finite $p$-)group for every sufficiently large prime $p$,  and coherent.
\end{theorem}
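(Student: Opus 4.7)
\noindent\emph{Proof proposal.}

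The plan is to show that, with probability tending to $1$, the random group $G=\langle x_1,\ldots,x_k\mid R\rangle$ embeds into an ascending HNN extension of a finitely generated free group. The three conclusions then follow from known properties of such HNN extensions---residual finiteness due to Borisov--Sapir, virtual residual $p$-finiteness for all sufficiently large primes $p$ from the same line of work, and coherence due to Feighn--Handel---each of which is inherited by finitely generated subgroups.

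Via Lemmas~\ref{lem2} and~\ref{lem3}, I reduce to Model CR, so $R$ is uniform on cyclically reduced words of length $n$. The exponent sum vector $\sigma=\sigma(R)\in\mathbb Z^k$ is, after rescaling by $\sqrt n$, asymptotically Gaussian by the central limit theorem applied to the Markov chain of letters of such words; in particular $\sigma\neq 0$ with probability $\to 1$. Using the surjection $\mathrm{Aut}(F_k)\twoheadrightarrow\mathrm{GL}_k(\mathbb Z)$ on abelianizations, I apply a Nielsen automorphism so that, without changing the isomorphism type of $G$, the new relator has exponent sum zero in one distinguished generator $t$. I then perform the Magnus rewriting of $R$ in the generators $y_j^{(\ell)}=t^{-\ell}x_jt^\ell$ ($j<k$, $\ell\in\mathbb Z$) of the kernel of the projection $F_k\to\mathbb Z$, $t\mapsto 1$; after cyclic conjugation, the $t$-ladder (sequence of partial $t$-exponent sums as $R$ is read left to right) takes values in $[0,N]$ with $N$ its maximum.

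The core of the argument is a probabilistic analysis of this ladder. By Donsker's invariance principle the rescaled ladder converges in distribution to a Brownian excursion; the letters of $R$ visiting height $N$ are asymptotically uniform over $x_1^{\pm 1},\ldots,x_{k-1}^{\pm 1}$ and correspond to occurrences of the top-layer generators $y_1^{(N)},\ldots,y_{k-1}^{(N)}$ in the Magnus rewriting. The probabilistic input I need is a statement, based on the local time at the maximum of a Brownian excursion combined with this uniformity, that with probability $\to 1$ the top-layer structure is ``generic enough'' to support an ascending HNN decomposition. Concretely, I expect to find some $y_{j_0}^{(N)}$ occurring rarely enough that the relation $R=1$, together with its $t$-shifts, lets me express the top-layer generators in terms of lower ones, producing a finitely generated free group $F_0$ on a suitable finite subset of the $y_j^{(\ell)}$ and an injective endomorphism $\phi\colon F_0\to F_0$ induced by $\ell\mapsto \ell+1$. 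Magnus' Freiheitssatz then embeds $G$ into the ascending HNN extension $F_0*_\phi$ with $t$ as stable letter.

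The hard part will be the probabilistic claim. Donsker's theorem controls the local time at the ladder's maximum only in distribution, and the near-uniform distribution of letter types tells me which $x_j$'s appear there, but converting these into a $1-o(1)$ probability that the required combined ``generic enough'' event holds requires careful analysis of the joint behaviour of the excursion and its local time. This is where the ``non-trivial results about Brownian motions'' promised by the abstract enter, and where the hypothesis $k\ge 3$ is used decisively: with $k-1\ge 2$ candidate top-layer generators one has pigeonhole-style flexibility to extract the needed structure, whereas for $k=2$ only one candidate is available and typically appears many times, which is precisely why the two-generator case must be treated separately at the end of the paper.
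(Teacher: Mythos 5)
Your proposal diverges significantly from the paper's argument and contains a genuine gap precisely at the step you yourself flag as "the hard part."

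The structural difference: you fix a single coordinate direction $t$ (after a Nielsen automorphism), Magnus-rewrite with respect to $t$, and try to find a top-layer generator $y_{j_0}^{(N)}$ with a unique occurrence. The paper does not privilege any coordinate direction. Instead it considers the trace of $R$ in $\Z^k$, projects it orthogonally to the displacement vector $\xi$, and studies the convex hull of that projection. A corner of this hull determines a ``touching hyperplane'' $P$ with some rational normal $(n(1),\dots,n(k))$. Lemma~\ref{lm4} then embeds $G$ into a $(k+1)$-generator $1$-relator group by $x_i\mapsto x_i z^{n(i)}$ -- introducing a fresh stable letter $z$ in the direction of $P$'s normal, rather than sacrificing one of the original generators -- after which Theorem~\ref{thm1} (an embedding into a $2$-generator $1$-relator group via $C'(1/12)$ words together with Brown's criterion, Theorem~\ref{lm0}) finishes. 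This extra degree of freedom is what saves the argument.

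The gap in your version: restricting to a fixed coordinate ladder, the top level $N$ is typically visited more than once, and there is no reason the $k-1$ top-layer letter types spread out so that some $y_j^{(N)}$ (or some $y_j^{(\ell_j)}$ at $x_j$'s own maximal level) appears exactly once with probability $\to 1$. Your ``pigeonhole-style flexibility'' is not a proof; the events for different $j$ are strongly correlated because they refer to the same ladder. Indeed, the reason $k=2$ fails (Dunfield--Thurston) is exactly that this fixed-ladder condition has probability bounded away from~$1$, and having $k-1\ge 2$ candidate labels on the same top rung does not obviously fix this. The paper avoids the problem altogether: the number of corners of the convex hull tends to infinity in probability (this is where Cranston--Hsu--March on smoothness of the convex hull of planar Brownian motion enters -- not local time of an excursion, which you invoke but which does not appear in the actual argument), and a counting argument (inserting commutators at corners; Lemma~\ref{corners}) shows the probability that \emph{every} corner is multiply covered tends to zero. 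One good corner suffices, and the direction of the associated supporting hyperplane can be anything rational, which is why the failure mode you would have to rule out simply does not arise. You should replace your fixed-ladder analysis with the convex-hull/touching-hyperplane machinery, or else supply a genuine proof of your probabilistic claim -- as written it is an acknowledged hole, and not an easy one.

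A minor additional point: invoking the Freiheitssatz to produce the embedding into the HNN extension is not the right tool; the paper instead uses the congruence-extension property of $C'(1/12)$-subgroups of free groups (Lemma~\ref{lm1}) to reduce to the two-generator case, and then Brown's Bieri--Neumann--Strebel criterion.
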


As an immediate corollary of Theorem \ref{thm3}, we deduce that one
cannot replace a multiplicative constant in the result of Baumslag-Miller-Trager \cite[Theorem 1]{BMT} mentioned above by
an additive constant: if $n\ge 3$, then there exists no maps
$\phi\colon F_n\to F_n$ such that $|\phi(R)|-|R|$ is bounded from
above by some constant $C$ and such that for every non-trivial $R\in
F_n$, the group $\langle F_n\mid \phi(R)=1\rangle$ is not residually
finite. Indeed, it is easy to see that if such a map exists, the
probability of a 1-related group with $n$ generators to be
residually finite would be bounded away from 1 as $|R|$ tends to
$\infty$.

Although the proof of Theorem \ref{thm3} presented below is relatively short, it uses some strong results from different areas of mathematics that are very rarely employed together: geometric group theory, algebraic geometry and probability theory (Brownian motion). Namely, we use the result of Feighn and Handel \cite{FH} that ascending HNN extensions of free groups are coherent (that is mostly geometric group theory, more precisely discrete Morse theory), results by Borisov and Sapir from \cite{BS}, \cite{BS2} that every ascending HNN extension of a free group is residually finite and even virtually residually (finite $p$-)group for almost all primes $p$ (that is essentially a result from algebraic geometry, namely the theory of quasi-fixed points of polynomial maps over finite fields), a result of Olshanskii \cite{Ol95} about subgroups with congruence extension property of hyperbolic groups (this is geometric group theory), a result of Kapovich, Schupp and Shpilrain \cite{KSS} generic solvability of isomorphism problem for 1-related groups (this result uses many parts of geometric group theory including the Arzhantseva-Olshansky method, boundaries of hyperbolic groups, etc.),
and a result by Cranston, Hsu, and March \cite{Cranston} that the boundary of the convex hull of a Brownian trajectory is smooth almost surely in Wiener's  measure (probability theory) .

Note that almost all (with probability tending
to 1) 1-related groups with a relator  of size $r>>1$ satisfy the small cancellation condition
$C'(1/6)$ and in fact $C'(\lambda)$ for every fixed $\lambda>0$ \cite{Gro}. Hence they are hyperbolic almost surely. It is still a major open
question in group theory whether every hyperbolic group is
residually finite. The positive answer would of course imply a part
of Theorem \ref{thm3}. Most specialists believe (see, for example, Gromov's conjecture \cite[5.B]{Gro}), however, that there are non-residually finite hyperbolic groups, and that, moreover, almost all hyperbolic groups and even most $C'(\lambda)$-groups are not residually finite. Constructing an example of a non-residually finite hyperbolic group is difficult because all non-elementary hyperbolic groups have ``very many" quotients, including torsion ad even, in most cases, bounded torsion, quotients \cite{Gro,Ol95}.

{\bf Acknowledgement} The authors are grateful to Vadim Kaimanovich, Ilya Kapovich , Yuval Peres, Vladimir Shpilrain and B\'alint Vir\'ag for their help.

\section{The theory of 1-related groups}

\subsection{The case of two generators}

Let $G=\langle a, b\mid R=1 \rangle$ be a 1-related group, $R$ is a cyclically reduced word in $F_2=\langle a,b\rangle$.
Consider a square lattice $\Gamma$ in $\R^2$, the Cayley graph of $\Z^2$. We assume that horizontal edges are labeled by $a$ and the vertical edges are labeled by $b$. Let $\psi:F_2\rightarrow\Z^2$ be the abelianization map. Let $w$ be the path in $\Gamma$ starting at the origin  $(0,0)$ and reading the word $R$.  This $w$ is called the {\em trace} of the relator $R$. Note that $w$ can visit every vertex (edge) many times. Vertices (edges) visited only once are called {\em simple}.
A line $L$ in $\R^2$ is said to be a \emph{supporting} line of $w$ if the path $w$ lies on one side of $L$ and has a common vertex with $L$.

\begin{theorem}(Brown \cite[Theorem 4.4]{Brown})\label{lm0}
Let $G = \langle a, b | R = 1\rangle$, where $R$ is a nontrivial cyclically reduced word in the free group on $\{a, b\}$ and $R\notin [F_2,F_2]$. Let $w$ be the trace of $R$, ending at a point $(m,n)$.

$G$ is an ascending HNN extension of a free group if and only if one of the two supporting lines of $w$ parallel to the vector $(m,n)$ intersects $w$ in one simple vertex or one simple edge.
\end{theorem}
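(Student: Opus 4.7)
My plan is to deduce the theorem from the Bieri--Neumann--Strebel (BNS) theory applied to one-relator groups. Since $R$ is cyclically reduced and not in $[F_2,F_2]$, the abelianization $G^{\mathrm{ab}}=\Z^2/\langle(m,n)\rangle$ has rank one, so up to sign $G$ carries a unique nontrivial character $\chi\colon G\to\Z$; its level sets in the plane are precisely the affine lines parallel to the direction $(m,n)$. Consequently any expression of $G$ as an ascending HNN extension of a finitely generated free group must use $\chi$ (or $-\chi$) as the height map sending the stable letter to $1$, since no other nontrivial character on $G$ exists.

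I would then use the standard fact that $G$ admits such an ascending HNN structure with height $\chi$ iff $\ker\chi$ is finitely generated and $\chi\in\Sigma(G)$; finite generation of $\ker\chi$ combined with the Magnus/Moldavanski result (a finitely generated normal subgroup of a torsion-free one-relator group contained in the kernel of an integer character is free) automatically forces the base to be a finitely generated free group. Thus the theorem reduces to the assertion that $\chi\in\Sigma(G)\cup-\Sigma(G)$ iff one of the two supporting lines of $w$ parallel to $(m,n)$ meets $w$ in a single simple vertex or simple edge. To verify this, I would analyze the Cayley $2$-complex $\widetilde X$ of $\langle a,b\mid R\rangle$ equipped with the height function obtained by lifting $\chi$, and examine how each $2$-cell---a disk whose boundary is a translate of $w$---meets the extremal level sets: $w$ attains its $\chi$-extrema exactly on the supporting lines parallel to $(m,n)$, and the multiplicity of the intersection there is measured by how often $w$ revisits vertices or edges on that line.

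The principal obstacle is the combinatorial heart of the argument: showing that $\pi_1$-tameness of the sublevel sets of $\widetilde X$ under the $\Z$-action is equivalent to the ``simple vertex or simple edge'' condition at one of the two supporting lines. I expect to use a Stallings-fold / van Kampen analysis of the $2$-cell attachments near the extremal level: repeated visits of $w$ to lattice points on a supporting line create loops in the sublevel set that cannot be killed by finitely many generators, whereas a single simple vertex or simple edge on the supporting line leaves the sublevel set finitely generated after quotienting by the $\chi$-translation. Once this local picture is established, the equivalence of the two conditions in the theorem follows, and with it the desired characterization of ascending HNN structure.
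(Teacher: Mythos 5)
The paper does not supply a proof of this theorem; it is cited verbatim from Brown's paper \cite{Brown}, so there is no internal argument to compare against. Your high-level plan (reduce to the BNS-invariant $\Sigma^1$ and then analyze the Cayley $2$-complex with the lifted height function) is indeed the spirit of Brown's original treatment, but as written it has a genuine mathematical error and leaves the substantive part unproven.

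The error: you assert that ``$G$ admits such an ascending HNN structure with height $\chi$ iff $\ker\chi$ is finitely generated and $\chi\in\Sigma(G)$.'' This is false. If $\ker\chi$ is finitely generated then $G\cong\ker\chi\rtimes\Z$ is a degenerate (non-strictly) ascending HNN extension, which in BNS terms is equivalent to \emph{both} $[\chi]$ and $[-\chi]$ lying in $\Sigma^1(G)$; strictly ascending HNN extensions of finitely generated groups typically have $\ker\chi$ \emph{not} finitely generated. The basic counterexample is $BS(1,2)=\langle a,t\mid tat^{-1}=a^2\rangle$: with $\chi(t)=1$, $\chi(a)=0$ one has $\ker\chi\cong\Z[1/2]$, which is not finitely generated, yet $BS(1,2)$ is visibly an ascending HNN extension of $\Z$. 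The correct criterion, which you in fact implicitly fall back on one sentence later when you ``reduce the theorem to the assertion that $\chi\in\Sigma(G)\cup-\Sigma(G)$,'' is: $G$ is an ascending HNN extension of a finitely generated subgroup of $\ker\chi$ (with stable letter of height $\pm1$) iff at least one of $[\chi]$, $[-\chi]$ lies in $\Sigma^1(G)$. Your two formulations are not equivalent, so the proof as stated is internally inconsistent even before the hard step. Finally, the ``principal obstacle'' you identify---showing that the simple-vertex/simple-edge condition at a supporting line is equivalent to $[\chi]\in\Sigma^1(G)$ or $[-\chi]\in\Sigma^1(G)$---is exactly the content of Brown's Theorem 4.4, and your proposal outlines where such an argument would go but does not carry it out.
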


\subsection{Embedding into 2-generated groups}

Let $G=\langle x_1,...,x_k\mid R=1\rangle$. If the sum of exponents of $x_i$ in $R$ is 0, then we can apply the Magnus rewriting to $R$. It consists of
\begin{itemize}
\item removing all occurrences of $x_i$ in $R$;
\item replacing every occurrence of a letter $x_j$ in $R$ by the letter $x_{j,p}$ where $p$ is the sum of exponents of $x_i$ in the prefix of $R$ before that occurrence of $x_j$.
\end{itemize}
Let $R'$ be the resulting word. The second indices $p$ of letters in $R'$ will be called the {\em Magnus $x_i$-indexes}. We say that certain Magnus index is {\em unique} if it occurs only once in $R'$.

We are going to use the following statement, which can be deduced from, say, a general result in \cite{Ol95} about hyperbolic groups.

\begin{lemma} \label{lm1} Let $w_1,...,w_k$ be words in the free group $F_n$ satisfying $C'(\frac1{12})$. Then the subgroup $H=\langle w_1,...,w_k\rangle$ of $F_n$ satisfies the {\em congruence extension property}, that is for every normal subgroup $N$ of $H$, the intersection of the
normal closure $N^G$ of $N$ in $F_2$ with $H$ is $N$. In particular, the natural homomorphism $H/N\to G/N^G$ is injective.
\end{lemma}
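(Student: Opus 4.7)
The plan is to reduce the statement directly to Olshanskii's theorem on subgroups with the congruence extension property in hyperbolic groups. A free group is a (torsion-free) word-hyperbolic group, so it suffices to verify the hypotheses of that theorem for the tuple $w_1,\dots,w_k$ under the small cancellation assumption $C'(1/12)$.

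First I would recall the precise form of Olshanskii's theorem from \cite{Ol95}: in a non-elementary hyperbolic group $G$, a tuple of elements whose pieces (common initial/terminal segments of their cyclic shifts and inverses) are sufficiently short compared with the elements themselves generates a quasiconvex free subgroup $H$ which, moreover, has the congruence extension property in $G$. The threshold is given by a small cancellation-type constant; in the free group case, the classical Dehn presentation metric coincides with the word metric on $F_n$, so the geometric ``pieces'' in Olshanskii's setup coincide with the usual combinatorial pieces of the words $w_1,\dots,w_k$. Thus $C'(1/12)$ is (more than) enough to invoke the theorem.

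Next I would record the conclusion: $H=\langle w_1,\dots,w_k\rangle$ is free on $w_1,\dots,w_k$, and for every normal subgroup $N\trianglelefteq H$ one has $N^{F_n}\cap H = N$, which is the definition of CEP. The ``in particular'' clause is then immediate: the inclusion $H\hookrightarrow F_n$ descends to a homomorphism $H/N\to F_n/N^{F_n}$, whose kernel is precisely $(N^{F_n}\cap H)/N=N/N=1$, giving injectivity. (The ``$F_2$'' in the statement is a typo for the ambient free group $F_n$, and ``$G$'' in $N^G$ refers to that same ambient group.)

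The main obstacle is purely bookkeeping: Olshanskii formulates his small cancellation condition in \cite{Ol95} in terms of $\lambda$-pieces in a Cayley graph of a hyperbolic group relative to a fixed system of geodesic representatives, with an explicit constant $\lambda_0$ depending on the hyperbolicity constant. I would have to translate our combinatorial $C'(1/12)$ hypothesis on reduced words in $F_n$ into Olshanskii's geometric condition; since $F_n$ has hyperbolicity constant $\delta=0$ and geodesics are just reduced words, this translation is routine and $1/12$ is safely below any threshold $\lambda_0$ that the general theorem requires. No further small cancellation or hyperbolicity work is needed, and in particular the step is independent of the probabilistic framework used elsewhere in the paper.
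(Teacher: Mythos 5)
Your proposal matches the paper's own (un-elaborated) approach: the authors give no proof, stating only that the lemma ``can be deduced from, say, a general result in \cite{Ol95} about hyperbolic groups,'' and your argument is precisely that deduction carried out. Your translation of the $C'(1/12)$ hypothesis into Ol'shanskii's geometric small-cancellation condition in the hyperbolic (here, free) setting, your observation that pieces coincide with the combinatorial ones since $\delta=0$, and your derivation of the ``in particular'' clause from $N^{F_n}\cap H=N$ are all correct.
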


Let $\phi$ be the map $F_k\to F_n$ (where $F_n=\langle x_1,...,x_n\rangle$) given by $x_i\mapsto w_i, i=1,...,k$ where $w_1,...,w_k$ satisfy $C'(\frac1{12})$. Lemma \ref{lm1} immediately implies

\begin{lemma}\label{lm2} The map $\phi$ induces an injective homomorphism from $G=\langle x_1,...,x_k\mid R=1\rangle$ to the $1$-related $n$-generated group $\langle x_1,...,x_n\mid \phi(R)=1\rangle$.
\end{lemma}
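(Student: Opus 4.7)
The plan is to reduce Lemma \ref{lm2} directly to Lemma \ref{lm1} by identifying $G$ with the quotient of $H = \langle w_1, \ldots, w_k\rangle$ by an appropriate normal subgroup, and then applying the congruence extension property.

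First, I would record the standard fact that words satisfying the $C'(1/12)$ small cancellation condition in $F_n$ freely generate the subgroup they span. Thus $\phi\colon F_k \to F_n$ is injective and its image is the free subgroup $H$ of rank $k$. In particular, $\phi$ restricts to an isomorphism $F_k \xrightarrow{\sim} H$ carrying $R$ to $\phi(R)$.

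Next, under this isomorphism the normal closure $\langle\langle R\rangle\rangle_{F_k}$ maps onto $N := \langle\langle \phi(R)\rangle\rangle_H$, so that
\[
G \;=\; F_k/\langle\langle R\rangle\rangle_{F_k} \;\cong\; H/N.
\]
The group $\langle x_1,\ldots,x_n\mid \phi(R)=1\rangle$ is $F_n/N^{F_n}$, where $N^{F_n} = \langle\langle \phi(R)\rangle\rangle_{F_n}$. The homomorphism induced by $\phi$ in the statement of the lemma fits into the commutative diagram sending $H/N$ to $F_n/N^{F_n}$ via the inclusion $H \hookrightarrow F_n$.

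Finally, I would invoke Lemma \ref{lm1} with this specific $N$: since the $w_i$ satisfy $C'(1/12)$, the subgroup $H$ has the congruence extension property, and the statement of Lemma \ref{lm1} is precisely that $H/N \to F_n/N^{F_n}$ is injective for every normal subgroup $N \triangleleft H$. Applying it to our $N$ yields injectivity of $G \hookrightarrow \langle x_1,\ldots,x_n\mid \phi(R)=1\rangle$. There is no real obstacle here — the only thing to verify carefully is that $\phi$ sends the normal closure of $R$ in $F_k$ into (hence isomorphically onto) the normal closure of $\phi(R)$ in $H$, which is immediate from $\phi$ being a group isomorphism onto $H$. Lemma \ref{lm1} then does all the work.
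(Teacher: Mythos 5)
Your proof is correct and follows the same route as the paper, which presents Lemma \ref{lm2} as an immediate consequence of Lemma \ref{lm1}. You also make explicit the point the paper leaves implicit --- that the $C'(1/12)$ condition forces $w_1,\ldots,w_k$ to freely generate $H$, so that $\phi$ identifies $G$ with $H/N$ for $N=\langle\langle\phi(R)\rangle\rangle_H$ and $N^{F_n}=\langle\langle\phi(R)\rangle\rangle_{F_n}$, after which the congruence extension property delivers injectivity of $H/N\to F_n/N^{F_n}$.
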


\begin{theorem}\label{thm1}
Consider a group $G = \langle x_1,x_2,\dots,x_k | R = 1\rangle$, where $R$ is a
 word in the free group on $\{x_1,x_2,\dots,x_k\}$, $k\geq2$.
Assume the sum of exponents of $x_k$ in $R$ is zero and that the
maximal Magnus $x_k$-index of $x_1$ is unique. Then $G$ can be
embedded into an ascending HNN extension of a finitely generated
free group.
\end{theorem}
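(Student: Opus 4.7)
My plan is to realise $G$ directly as an ascending HNN extension of a finitely generated free group via the Magnus--Moldavanskii HNN decomposition of a one-relator group in which a distinguished generator has zero exponent sum in the relator. The first step is to form the Magnus rewriting $R'$ of $R$ in the conjugate letters $x_{j,p}=x_k^{-p}x_j x_k^p$ for $j\ne k$, which is legal because $x_k$ has exponent sum $0$ in $R$. Let $\beta$ denote the maximal Magnus $x_k$-index appearing in $R'$; by hypothesis this index is attained by a unique letter of $R'$, namely $x_{1,\beta}$, and this is the generator I will eliminate.

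I next invoke the Magnus--Moldavanskii construction to present $G$ as an HNN extension with stable letter $x_k$, base
\[
H \;=\; \bigl\langle\, x_{j,p} : (j,p)\text{ appears in }R' \,\big|\, R' = 1\,\bigr\rangle,
\]
and associated subgroups $H_- = \langle x_{j,p} : p < \beta\rangle$, $H_+ = \langle x_{j,p} : p > \alpha\rangle$ (with $\alpha$ the minimal index in $R'$), where conjugation by $x_k^{-1}$ induces the shift isomorphism $\phi\colon H_-\to H_+$, $x_{j,p}\mapsto x_{j,p+1}$. Magnus's Freiheitssatz already ensures that $H_\pm$ are free.

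The uniqueness hypothesis is exactly what is required to identify $H_-$ with the whole base. Writing $R' = U\cdot x_{1,\beta}^{\pm1}\cdot V$ with $U,V$ free of the letter $x_{1,\beta}$, every letter of $U$ and $V$ has Magnus index strictly less than $\beta$, because $x_{1,\beta}$ is the only letter of $R'$ at the maximal level. Solving $R'=1$ then expresses $x_{1,\beta}$ as a word in letters of index $<\beta$, which shows $x_{1,\beta}\in H_-$ and hence $H=H_-$. A Tietze elimination of $x_{1,\beta}$, combined with the Freiheitssatz applied to $X\setminus\{x_{1,\beta}\}$ (where $X$ is the letter set of $R'$), then identifies $H$ with the finitely generated free group on this smaller letter set, so that $\phi$ becomes an injective endomorphism of the free group $H$ and $G$ is displayed as an ascending HNN extension of $H$ with stable letter $x_k$ --- a stronger statement than the claimed embedding. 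The step I expect to require the most care is precisely this final identification of $H$ with a free group: both its generation by $X\setminus\{x_{1,\beta}\}$ (from the Tietze elimination enabled by the uniqueness of $x_{1,\beta}$) and the absence of any residual relation (from the Freiheitssatz applied to this subset) must be verified, after which everything reduces to bookkeeping inside the Magnus--Moldavanskii framework.
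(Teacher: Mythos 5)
Your proposal has a genuine gap: the strategy of exhibiting $G$ \emph{itself} as an ascending HNN extension of a free group via Magnus--Moldavanskii fails when $k\ge 3$, which is exactly why the theorem only claims an \emph{embedding}.

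Concretely, there are two problems. First, a preliminary normalization is missing. The hypothesis says only that the maximal Magnus $x_k$-index of $x_1$ is attained by a unique occurrence of $x_1$; it does not say that index dominates the maximal indices of the other $x_j$, $1<j<k$. So your assertion that ``every letter of $U$ and $V$ has Magnus index strictly less than $\beta$, because $x_{1,\beta}$ is the only letter of $R'$ at the maximal level'' is not a consequence of the hypothesis. The paper fixes this by first applying the automorphism $x_i\mapsto x_k^{-m}x_i x_k^m$ (for $1<i<k$, $m$ large) to force the maximal $x_k$-index of $x_1$ to be strictly larger than that of each other $x_i$; without that step the argument does not get off the ground.

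Second, and more seriously, even after this normalization the identity $H=H_-$ is false for $k\ge 3$. In the Magnus--Moldavanskii decomposition, $H$ is the one-relator group on the letters $x_{j,p}$ with $\alpha\le p\le\beta$ for \emph{every} $j\ne k$, while $H_-$ omits an entire top layer $\{x_{j,\beta}: j\ne k\}$, which contains $k-1$ letters. The single relation $R'$ can be used to eliminate only one generator, namely the unique $x_{1,\beta}$. The remaining top-layer generators $x_{2,\beta},\dots,x_{k-1,\beta}$ are free generators of $H$ (by the Freiheitssatz) that do not lie in $H_-$, so $H_-\subsetneq H$ (indeed $H$ has free rank one more than the number of letters in $R'$ minus one, while $H_-$ drops $k-1$ generators from the top). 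For instance, for $R=x_1x_3^{-1}x_1^{-1}x_2^{-1}x_3x_2$ (so $G\cong(\Z^2)*\Z$), the hypotheses of the theorem hold, yet neither associated subgroup equals the base: $H$ is free of rank $3$ while $H_\pm$ have rank $2$. Your approach would work unchanged only in the case $k=2$, where it is essentially Brown's criterion (Theorem~\ref{lm0}). The paper instead avoids the issue by \emph{first} embedding $G$ into a $2$-generator one-relator group $\langle a,b\mid R(w_1,\dots,w_k)\rangle$ using $C'(\tfrac1{12})$ words $w_i$ and the congruence extension property (Lemmas~\ref{lm1}--\ref{lm2}), with the $w_i$ designed so that $R(w_1,\dots,w_k)$ has a unique maximal Magnus $a$-index of $b$; only then does it invoke Brown's criterion on the $2$-generator group. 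That embedding step, which your proposal skips, is the core of the argument.
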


\begin{proof}
We may assume that the maximal Magnus $x_k$-index of $x_1$ is bigger than the one of $x_i$, for $1<i<k$, otherwise apply automorphism  $x_i\to x_k^{-m}x_i{x_k^m}, x_j\to x_j (j\ne i)$ for $m$ large enough.

Let $n\gg1$. Consider the following words $w_1,...,w_k\in F_2$.
\begin{align*}
w_1&=aba^2b...a^nba^{n+1}ba^{-n-1}ba^{-n}b...a^{-2}ba^{-1}b\\
w_i&=ab^ia^2b^i...a^nb^ia^{-n}b^i...a^{-2}b^ia^{-1}b^i, \quad \mathrm{ for}\quad 1<i<k\\
w_k&=ab^ka^2b^k...a^nb^ka^{-n}b^k...a^{-2}b^k
\end{align*}

These words satisfy the following conditions

\begin{enumerate}
\item[(1)] For a large enough $n$, these words and their cyclic shifts satisfy the small cancellation condition $C'(\frac1{12})$. Indeed, the maximal length of a subword repeating twice as a prefix of cyclic shifts of $w_i$ does not exceed $2n+3+k$, and the length of each $w_i$ is at least $n^2$. For a large enough $n$, we have $\frac{2n+3+k}{n^2}<\frac1{12}$.

\item[(2)] The sum of exponents of $a$ in $w_i$, $i<k$, is equal to 0, the sum of exponents of $a$ in $w_k$ is $1$.

\item[(3)] The maximal Magnus $a$-index of $b$ in $w_1$ is $\frac{(n+1)(n+2)}{2}$, and this index is unique. The maximal Magnus $a$-indices of $b$ in all other words are strictly smaller than the one in $w_1$.
\end{enumerate}

By Lemma \ref{lm2}, the group $G$ embeds into the 2-generated $1$-related group with presentation $\langle a,b\mid R(w_1,...,w_k)=1 \rangle$.

It remains to prove that $R(w_1,...,w_k)$ satisfies the conditions
of Lemma \ref{lm0}. Let $R'=R(w_1,...,w_k)$. Clearly the sum of
exponents of $a$ in $R'$ is zero. Every letter $b$ with maximal Magnus
$a$-index in $R'$ comes from some occurrence of a word $w_i$
substituted for letter $x_i$. The sum of exponents of $a$ is
nonzero only in $w_k$. Therefore the Magnus $a$-index of a letter $b$ is a
sum of the Magnus $x_k$-index of the letter $x_i$ in $R$, for which
it was substituted, and the Magnus $a$-index of $b$ in $w_i$. The
Magnus $x_k$-index in $R$ is maximal for the letter $x_1$ and the
maximum is unique in $R$.  The maximal Magnus $a$-index of $b$ in
$w_1$ is also unique (and bigger than in all other $w_i$'s). This
gives a uniqueness of  the maximal Magnus $a$-index in
$R'$. Therefore  there is a supporting line parallel to the  $b$-axes that
intersects the trace of $R'$ in one simple edge corresponding to the
letter $b$ with the maximal Magnus $a$-index. Therefore by Lemma \ref{lm0}
the group $\langle a,b|R'=1\rangle$ is an ascending HNN extension of
a finitely generated free group.
\end{proof}

\subsection{More than 2 generators and walks in $\Z^k$}

In the case of more than two generators we generalize the notion of
supporting line in the following way. Given a relator $R$, a
nontrivial 
word in the free group on $\{x_1,x_2,\dots,x_k\}$, let $w$ be its trace
in the lattice $\Z^k$.
For a letter $t\in\{x_1,x_2,\dots,x_k\}$, let $w_t$ be a set of
edges labeled by $t$ in $w$. A vertex on $w_t$ is called {\em
simple} if it does not belong to two edges of $w_t$. In particular,
if $w$ contains two consecutive edges with labels $t, t^{-1}$, then
the endpoints of these edges are not simple vertices.

\begin{definition}
A hyperplane $P$ is a \emph{supporting} hyperplane of $w_t$ if the trace
$w_t$ lies on one side of $P$ and has a common vertex with $P$. A hyperplane $P$
is said to be \emph{touching $w$} if
 \begin{itemize}
 \item $P$ is parallel to  the line containing the origin and the endpoint of $w$,
 \item there is $t\in\{x_1,x_2,\dots,x_k\}$, such that $P$ is a supporting hyperplane of $w_t$,
 \item the intersection of $P$ and $w_t$ consists of one simple vertex or one simple edge.
\end{itemize}
\end{definition}

\begin{lemma} \label{lm4}Let $G = \langle x_1,x_2,\dots,x_k | R = 1\rangle$, where $R$ is a
word in the free group on $\{x_1,x_2,$ $\dots,x_k\}$, $k\geq2$. Let $w$
be a trace of $R$ in the lattice $\Z^k$. If there is a hyperplane
$P$ touching $w$, then
 $G$ can be embedded into an ascending HNN extension of a free group.
\end{lemma}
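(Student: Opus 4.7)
The plan is to mimic the proof of Theorem \ref{thm1}, modifying the auxiliary words $w_1,\dots,w_k\in F_2$ so that they encode the direction of $P$, and then applying Lemma \ref{lm2} plus Brown's theorem (Theorem \ref{lm0}) to the resulting $2$-generator relator. First I would permute generators so that $t=x_1$, and perturb $P$ within the open set of supporting hyperplanes parallel to the endpoint $v=[R]$ that still intersect $w_{x_1}$ in the same simple vertex or edge, in order to assume $P$ has a primitive integer normal $\nu=(\nu_1,\dots,\nu_k)$. The parallelism condition translates into $\nu\cdot v=0$.

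Next I would construct words $w_1,\dots,w_k\in F_2=\langle a,b\rangle$ along the lines of the Theorem \ref{thm1} construction, but modified so that the sum of $a$-exponents of $w_i$ is exactly $\nu_i$ (rather than $\delta_{ik}$). Concretely, one starts from a Theorem \ref{thm1}-style word of length $\Theta(n^2)$ built from blocks $a^jb^i$ with $|j|\le n$, then appends a short correction $a^{\nu_i-s_i}$, where $s_i$ is the unadjusted exponent sum. Taking $n\gg\max_i|\nu_i|$ guarantees that (i) the $w_i$ and their cyclic shifts satisfy $C'(\tfrac{1}{12})$, (ii) the $a$-exponent sum of $w_i$ is $\nu_i$, and (iii) $w_1$ retains a unique maximum Magnus $a$-index of $b$ of size $\Theta(n^2)$, strictly larger than the corresponding maxima in every other $w_i$ or $w_i^{-1}$. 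By Lemma \ref{lm2}, the substitution $x_i\mapsto w_i$ embeds $G$ into the two-generator one-relator group $G'=\langle a,b\mid R'\rangle$, where $R'=R(w_1,\dots,w_k)$.

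Now I would verify the hypothesis of Brown's Theorem \ref{lm0} for $G'$. The sum of $a$-exponents of $R'$ equals $\sum_i v_i\nu_i=\nu\cdot v=0$. Reading $R'$ block-by-block (one block per letter of $R$), the running $a$-exponent sum at the start of the $l$-th block equals $\nu\cdot[p_{l-1}]$, where $p_{l-1}$ is the prefix of $R$ of length $l-1$; the maximum of this quantity is exactly the $\nu$-height $c$ of $P$. By the simple-intersection assumption, $c$ is attained uniquely at a letter $x_1^{\pm 1}$ of $R$ whose associated vertex lies on $P$. At this unique block the within-block Magnus $a$-index of $b$ reaches its unique peak $M_1$, giving total $c+M_1$. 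All other blocks either start at height $<c$ or carry a within-block maximum $<M_1$, so they contribute strictly less. Hence the maximum Magnus $a$-index of $b$ in $R'$ is unique, which produces the unique simple edge (or vertex) on the corresponding supporting line of the trace of $R'$ demanded by Theorem \ref{lm0}. Therefore $G'$ is an ascending HNN extension of a finitely generated free group, and the embedding $G\hookrightarrow G'$ proves the lemma.

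The main obstacle is the tailored construction of the $w_i$'s: the three requirements ($C'(\tfrac{1}{12})$, prescribed $a$-exponent sums $\nu_i$, and unique dominant Magnus $a$-index peak in $w_1$) must be reconciled simultaneously, and the analysis must accommodate the two geometric sub-cases of the touching condition: the simple-edge case, which forces $\nu_1=0$ and gives both endpoints of the $x_1$-edge on $P$, versus the simple-vertex case, where $\nu_1\neq 0$ and one must track which endpoint of the unique $x_1$-edge adjacent to $P$ actually sits on $P$. Once these bookkeeping issues are handled, the Brown verification is a routine block-by-block accounting.
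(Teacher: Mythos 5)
Your route is genuinely different from the paper's. Rather than building words $w_i\in F_2$ with prescribed $a$-exponent sums and verifying Brown's criterion directly, the paper first embeds $G$ into the $(k+1)$-generator one-relator group $H=\langle x_1,\dots,x_k,z\mid\phi(R)\rangle$ via the free-factor substitution $\phi:x_i\mapsto x_iz^{n(i)}$ (not Lemma~\ref{lm2}), verifies that $\phi(R)$ has zero $z$-exponent sum and a unique maximal Magnus $z$-index among the occurrences of $x_t$ --- a comparison that involves only the heights of endpoints of $x_t$-edges, exactly what the touching hypothesis controls --- and then invokes Theorem~\ref{thm1}, whose proof contains the preliminary conjugation automorphism making $x_t$ dominant over the other generators before any $F_2$-substitution is performed. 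Your proposal inlines all of this, and must therefore control heights across the entire walk $w$, not just across the $x_t$-edges.

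That is where the gap lies. Your sentence ``the maximum of this quantity is exactly the $\nu$-height $c$ of $P$ \dots All other blocks either start at height $<c$ or carry a within-block maximum $<M_1$, so they contribute strictly less'' is not justified. The hypothesis only says that $P$ is a supporting hyperplane of $w_{x_1}$; the remaining vertices of $w$ may, and generically will, lie strictly above $P$. So a block $w_i^{\pm}$ with $i\neq 1$ can begin at a height $h_l>c$, and from $M_i<M_1$ one cannot deduce $h_l+M_i<c+M_1$. To repair this you must ensure that $M_1-\max_{i\neq 1}M_i^{\pm}$ dominates the full $\nu$-height range of $w$, which forces $n$ to grow like $|R|\cdot\max_i|\nu_i|$ rather than merely $\max_i|\nu_i|$. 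There is a subtler second issue inside the $x_1$-blocks themselves: when $\epsilon_0\nu_1>0$ the touching vertex is the \emph{end}, not the start, of the block, so $h_{l_0}<c$ and another $x_1$-block (of opposite sign) may start at a higher $h_l$; that the touching block nevertheless dominates requires noticing that, after appending the tail $a^{\nu_1-s_1}$, the in-block position of the dominant $b$ tracks the \emph{lower} endpoint of each $x_1$-edge in exactly the way the paper's $z$-index does, a point you do not verify. Both defects are repairable, but as written the check of Brown's criterion does not follow; the paper's detour through $H$ and Theorem~\ref{thm1} exists precisely to avoid this global height bookkeeping, since after the substitution $\phi$ only the single new generator $z$ carries a height.
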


\begin{proof}
We will embed $G$ into a one-relator group on $k+1$ generators that satisfies the condition of Theorem \ref{thm1}.

 If the normal vector of $P$ has irrational entries, then there is a
hyperplane $P'$ whose normal vector has rational entries that is
also touching $w$. Thus we can assume the normal vector of $P$ pointing toward
the half-space not containing $w_t$ is $(n(1),n(2),...,n(k))$ with
integer entries.

Consider the following substitution $\phi$:
\begin{align*}
x_i\mapsto x_iz^{n(i)},\quad i=1,\dots,k.
\end{align*}

Let $H=\langle x_1,x_2,\dots,x_k,z|\phi(R)\rangle$. Then $G$ is embedded into $H$ by $\phi$.
Since the normal vector of $P$ is orthogonal to the line connecting the origin and the endpoint of $w$, the sum of exponents of $z$ in $\phi(R)$ is zero.

It remains to show that the maximal Magnus $z$-index of $x_t$ in $\phi(R)$ is unique.

We can assume that the edge in $w_t$  intersecting $P$ corresponds to the first letter of $R$. 
Assume that there is another letter $x_t$ (at position $j$) in $\phi(R)$ with at least the same Magnus $z$-index as the first letter $x_t$ in the word.
Let $m(i)$ be the total sum of exponents of letter $x_i$ between these two occurrences of $x_t$ (note that it is the same in $R$ as in $\phi(R)$). If the exponent of the first letter $x_t$ is $1$, then add $1$ to $m(t)$.  If the exponent of the other letter $x_t$ (at position $g$) is $-1$, then subtract $1$ to $m(t)$. The Magnus $z$-index of the latter letter $x_t$ differs from the  Magnus $z$-index of the first letter by precisely $m(1)n(1)+m(2)n(2)+\dots+m(k)n(k)$.

Consider the edge corresponding to the first letter $x_t$ and the edge of letter $x_t$ at position $j$. Connect their initial points in $\Z^k$ by a vector (the vector connecting their terminal points is the same). It is easy to see that the coordinates of this vector are $(m(1),m(2),\dots,m(k))$. If the scalar product of this vector with the normal vector of $P$ is non-negative, then one of the endpoints of the edge of letter $x_t$ at position $g$ lies at $P$ or on the other side than $w_t$. This is impossible, because $P$ is a hyperplane touching $w$ (with respect to $x_t$).
\end{proof}

\begin{remark} Let $R$ be a non-reduced word in $\{x_1,...,x_n\}$,
and let $R'$ be the cyclically reduced form of $R$. Let $w, w'$ be
the walks corresponding to $R$ and $R'$ respectively. If there
exists a touching plane for $w$, then there exists a touching plane
for $w'$. The proof easily proceeds by induction on the number of
reductions.
\end{remark}

Let $w$ be the walk in $\Z^k$ corresponding to $R$. Let $\xi$ be the
vector connecting the initial and the terminal points of $w$. Let
$t\in \{1,...,k\}$. For every supporting plane $P$ of $w_t$ let $P^+$
be the closed half-space of $\R^k$ bounded by $P$ and containing
$w_t$. The intersection of all $P^+$ is a convex polyhedron in
$\R^k$. We shall call  $\Delta_0(t)$ the projection of the boundary
$\Delta(t)$ of that polyhedron onto the hyperplane orthogonal to
$\xi$. Then $\Delta(t)$ is the right cylinder with base
$\Delta_0(t)$, i.e. the direct product $\Delta_0(t)\times \R$. A
vertex of the random walk projected to a $0$-cell of $\Delta_0(t)$ is called a
{\em corner}
. For every vertex $x$ that is a
$0$-cell of a $\Delta_0(t)$, the line $x+\R\xi\subseteq \Delta(t)$
will be called the {\em support line} of $w_t$.

Lemma \ref{lm4}  immediately implies

\begin{lemma} \label{lm5} If one of the support lines of $w_t$ intersects $w_t$ in a simple vertex or a simple edge, then $G$ is embeddable into an ascending HNN extension of a free group.
\end{lemma}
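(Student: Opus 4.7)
The plan is to reduce directly to Lemma \ref{lm4} by manufacturing, from the given support line, a hyperplane in $\R^k$ that satisfies the three bullets in the definition of a touching hyperplane of $w$.

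Let $\ell = x + \R\xi$ be a support line of $w_t$ that meets $w_t$ in a simple vertex or simple edge, and write $\pi\colon\R^k\to\xi^\perp$ for orthogonal projection along $\xi$. By definition, $\bar x := \pi(x)$ is a $0$-cell of $\Delta_0(t)$. Since $w_t$ consists of only finitely many edges, $\mathrm{conv}(w_t)$ is bounded, so $\pi(\mathrm{conv}(w_t))\subseteq \xi^\perp$ is a bounded convex polytope whose boundary is exactly $\Delta_0(t)$, and $\bar x$ is an honest vertex of this polytope.

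Standard convex geometry then supplies a supporting hyperplane $P_0\subseteq \xi^\perp$ of this polytope---the level set of any linear functional strictly maximized at $\bar x$---with $P_0 \cap \pi(\mathrm{conv}(w_t)) = \{\bar x\}$. I would define $P := \pi^{-1}(P_0)\subseteq \R^k$; this is a hyperplane of $\R^k$ parallel to $\xi$, and it supports the cylinder $\mathrm{conv}(w_t) + \R\xi$ exactly along $\ell$. Since $w_t \subseteq \mathrm{conv}(w_t)$, one has $P\cap w_t = \ell \cap w_t$, which by hypothesis is one simple vertex or one simple edge. All three conditions in the definition of a touching hyperplane are thus met, and Lemma \ref{lm4} immediately yields the desired embedding of $G$ into an ascending HNN extension of a finitely generated free group.

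No serious obstacle arises; the only substantive input is the standard convex-geometric fact that a vertex of a bounded convex polytope is cut out by a supporting hyperplane touching the polytope only at that vertex. That fact is exactly what converts a codimension $(k-1)$ support line into a codimension $1$ touching hyperplane suitable for Lemma \ref{lm4}.
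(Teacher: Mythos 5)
Your proposal is correct and takes the same route the paper does: the paper simply asserts that Lemma~\ref{lm4} ``immediately implies'' Lemma~\ref{lm5}, leaving the reduction implicit, and you have supplied exactly the convex-geometric details behind that implication---namely, that a vertex of the projected polytope $\pi(\mathrm{conv}(w_t))$ admits a supporting hyperplane in $\xi^\perp$ meeting the polytope only at that vertex, whose preimage under $\pi$ is then a touching hyperplane of $w$ in the sense required by Lemma~\ref{lm4}.
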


\section{Random walks in $\Z^k$}
\subsection{Preliminaries}

Denote by $P^{NR}_n$ the (uniform) measure on simple random walks of length $n$ (not necessary reduced) and by $P^{NB}_n$ the uniform measure on non-backtracking simple random walks of length $n$.
To model cyclically reduced words, we denote by  $P^{CR}_n$ the uniform measure on non-backtracking simple random walks with last edge that is not inverse of the first edge of the walk (note that asymptotically this happens with probability $(2k-1)/2k$).
In all cases we can consider the sample space $\Omega$ containing all walks of any finite length.

We say that an event $A$ depends only on the cyclically reduced path of the random walk if $w\in A$ if and only if $w'\in A$, where $w'$ is the cyclically reduced path of $w$. An example of such event is an event that a support line of the cyclically reduced path $w'$ of a random walk $w$ intersects $w'$ in a simple vertex or a simple edge.

\begin{lemma} \label{lem2}
Let $A$ be an event depending only on the cyclically reduced path of the random walk. Assume  $\lim_{n \to\infty} P^{CR}_n(A)$ exists, then
\begin{align*}
    \lim_{n \to\infty} P^{NR}_n(A)=  \lim_{n \to\infty} P^{CR}_n(A).
\end{align*}
\end{lemma}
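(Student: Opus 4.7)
The plan is to express $P^{NR}_n(A)$ as a mixture of $P^{CR}_\ell(A)$ weighted by the distribution of the length $\ell$ of the cyclic reduction under $P^{NR}_n$, and then conclude using concentration of that length.

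First I would establish that, conditional on $|\mathrm{cycred}(w)|=\ell\ge 1$, the cyclic reduction of a uniform NR walk is uniformly distributed over cyclically reduced walks of length $\ell$. Identifying a walk of length $n$ with the corresponding word in $F_k$, its free reduction has the form $zuz^{-1}$, where $u$ is the cyclic reduction and the product $zuz^{-1}$ is reduced. A direct count shows that the number of $z$ of length $m$ with $zuz^{-1}$ reduced is $1$ when $m=0$ and $(2k-2)(2k-1)^{m-1}$ when $m\ge 1$; the factor $2k-2$ comes from choosing the outermost letter of $z$ to avoid both $u_1^{-1}$ and $u_\ell$, which are distinct precisely because $u$ is cyclically reduced. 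Combined with the symmetry of the $2k$-regular Cayley tree of $F_k$ --- the number of length-$n$ walks ending at a given vertex depends only on the distance of that vertex from the identity, by transitivity of the tree automorphism group on spheres --- this shows that the number of length-$n$ NR walks with cyclic reduction equal to $u$ depends only on $n$ and $\ell=|u|$. Summing over $u$,
\[
P^{NR}_n(A)\;=\;\sum_{\ell\ge 1}P^{NR}_n(|\mathrm{cycred}|=\ell)\,P^{CR}_\ell(A)\;+\;o(1),
\]
where the $o(1)$ absorbs the exponentially small $\ell=0$ contribution (walks returning to the origin in the transient simple random walk on $F_k$).

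Next I would show that $|\mathrm{cycred}(w)|\to\infty$ in probability as $n\to\infty$. Under $P^{NR}_n$ the freely reduced length $|\mathrm{red}(w)|$ is the graph distance from the origin traveled by the simple random walk on the $2k$-regular tree $F_k$; this walk has positive drift $1-1/k$ for $k\ge 2$, so $|\mathrm{red}(w)|/n\to 1-1/k$ in probability by the law of large numbers. The number of boundary cancellations converting $\mathrm{red}(w)$ into $\mathrm{cycred}(w)$ is tight (the first and last letters of the reduced word decouple asymptotically, and the run of simultaneous boundary matches is dominated by a geometric random variable), so $|\mathrm{red}|-|\mathrm{cycred}|$ is $o(n)$ in probability, and hence $|\mathrm{cycred}|\to\infty$.

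Finally, set $L=\lim_\ell P^{CR}_\ell(A)$ and fix $\varepsilon>0$; choose $N$ with $|P^{CR}_\ell(A)-L|<\varepsilon$ for all $\ell\ge N$. Splitting the display above at $\ell=N$, the low-$\ell$ contribution is bounded by $P^{NR}_n(|\mathrm{cycred}|<N)\to 0$, while the high-$\ell$ contribution lies within $\varepsilon$ of $L\cdot P^{NR}_n(|\mathrm{cycred}|\ge N)\to L$. Letting $n\to\infty$ and then $\varepsilon\to 0$ gives $P^{NR}_n(A)\to L$. The main obstacle is the first step: one must combine the combinatorial count of conjugating prefixes with the tree symmetry of $F_k$ to conclude that the number of NR walks with cyclic reduction $u$ depends on $u$ only through $|u|$. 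The concentration step is a standard drift estimate and the final assembly is a routine mixture-of-limits argument.
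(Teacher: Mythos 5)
Your proof is correct and follows essentially the same approach as the paper's: condition on the length $\ell$ of the cyclic reduction, express $P^{NR}_n(A)$ as a mixture $\sum_\ell P^{NR}_n(|w'|=\ell)\,P^{CR}_\ell(A)$, and invoke concentration of $\ell$ to pass to the limit. The paper merely asserts the conditional equality $P^{NR}_n(A\mid |w'|=\ell)=P^{CR}_\ell(A)$ and the fact that $P^{NR}_n(|w'|<n_0)\to 0$, whereas you supply the supporting details (the $(2k-2)(2k-1)^{m-1}$ count of conjugating prefixes together with tree homogeneity for the conditional uniformity, and the positive-drift estimate on the $2k$-regular tree for the concentration), both of which are correct.
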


\begin{proof}
Let $\lim_{n \to\infty} P^{CR}_n(A)=a$ and assume $n_0$ is such that for all $n>n_0$
\begin{align*}
|P^{CR}_n(A)-a|<\epsilon.
\end{align*}

If an event $A$ depends only on the cyclically reduced path $w'$ of a random walk $w$, then conditioning on the length of the cyclically reduced path $|w'|$ we see that
$P^{NR}_n(A||w'|=k)=P^{CR}_k(A)$, provided $P^{NR}_n(|w'|=k)>0$. Let $n_1$ be such that for all $n>n_1$,
$P^{NR}_n(|w'|< n_0)\leq\epsilon.$

Then
\begin{align*}
P^{NR}_n(A)=\sum_{k=0}^n P^{NR}_n(|w'|= k)P^{CR}_k(A),
\end{align*}
and we can split the sum in two parts ($k\leq n_0$ and $k>n_0$) and obtain for $n>n_1$
\begin{align*}
(1-\epsilon)(a-\epsilon)<P^{NR}_n(A)<\epsilon+(a+\epsilon).
\end{align*}

Therefore $\lim_{n \to\infty} P^{NR}_n(A)=a$.
\end{proof}

\begin{lemma} \label{lem3}
Assume  $\lim_{n \to\infty} P^{CR}_n(A)$ exists, then the limit probability in the model IC exists as well and they are the same.
\end{lemma}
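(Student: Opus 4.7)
The plan is to reduce the comparison between the CR and IC models to a counting problem about equivalence classes and then to control that count using the generic isomorphism classification of Kapovich--Schupp--Shpilrain \cite{KSS}. For this to be meaningful one must assume (as is the case in every application of this lemma in the present paper) that $A$ is invariant under the equivalence $\sim$ defining the IC model, i.e.\ $A$ is a property of the isomorphism type of $\langle x_1,\dots,x_k\mid R\rangle$; under that convention $P^{IC}_n(A)$ is simply the fraction of $\sim$-equivalence classes meeting $CS_n$ that lie in $A$.

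The first step is to use \cite{KSS} to produce a ``generic'' set $G_n\subseteq CS_n$ with $|CS_n\setminus G_n|\le \lambda^n|CS_n|$ for some $\lambda<1$, such that for every $R\in G_n$: (a) every $R'\sim R$ lies in $CS_n$ and is obtained from $R$ by a composition of a cyclic shift, the inversion $R\mapsto R^{-1}$, and a signed permutation of the generators; (b) the tautological symmetry group $\Sigma_{k,n}$, of cardinality $N_k(n):=2n\cdot 2^k\cdot k!$, acts freely on $R$. Consequently every $\sim$-class meeting $G_n$ is contained in $G_n$ and has size exactly $N_k(n)$.

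Next I would count. Generic classes partition $G_n$ into blocks of size $N_k(n)$ and hence contribute $|G_n|/N_k(n)$ to $|D_n|$ and $|G_n\cap A|/N_k(n)$ to $|D_n\cap A|$ (using $\sim$-invariance of $A$); non-generic classes contribute at most $|CS_n\setminus G_n|$ to each count. Dividing numerator and denominator by $N_k(n)/|CS_n|$ gives
\[
P^{IC}_n(A)=\frac{|G_n\cap A|/|CS_n|+O\!\bigl(N_k(n)|CS_n\setminus G_n|/|CS_n|\bigr)}{|G_n|/|CS_n|+O\!\bigl(N_k(n)|CS_n\setminus G_n|/|CS_n|\bigr)}.
\]
Since $N_k(n)=O(n)$ and $|CS_n\setminus G_n|/|CS_n|=O(\lambda^n)$ the error terms vanish, while $|G_n|/|CS_n|\to 1$ and $|G_n\cap A|/|CS_n|\to\lim_{n\to\infty}P^{CR}_n(A)$, yielding the desired equality of limits.

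The hard part will be extracting from \cite{KSS} exactly the quantitative statement needed: an exponential (or even just $o(1/n)$) bound on the non-generic set, together with the explicit description of the tautological isomorphism orbit. The auxiliary point (b)---freeness of $\Sigma_{k,n}$ on a generic $R$---is not the heart of KSS but follows from an elementary and independent bound on the fraction of cyclically reduced words of length $n$ with a non-trivial cyclic or Nielsen-type stabilizer, which is itself exponentially rare.
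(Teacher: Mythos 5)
Your proof follows essentially the same route as the paper's: invoke Kapovich--Schupp--Shpilrain genericity, count orbits under the roughly $n\cdot 2^k k!$-element symmetry group of cyclic shifts and relabelings, and pass the limit through. You are somewhat more careful than the paper in two places the paper leaves implicit --- stating explicitly that $A$ must be $\sim$-invariant for $P^{IC}_n(A)$ to make sense, and observing that since the orbit size grows linearly in $n$ one needs the non-generic set to be $o(1/n)$ (not merely $o(1)$), which is indeed what the exponential genericity in \cite{KSS} supplies --- but these are sharpenings of the same argument rather than a new one.
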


\begin{proof}
By the result of Kapovich, Schupp and Shpilrain \cite[Theorem C]{KSS}, there is a {\em generic} set of cyclically reduced words  $Q$ such that two 1-related groups with relators in $Q$ are isomorphic if and only if their relators are obtained from each other by a relabeling automorphism and cyclic shift. This set is  generic in the following sense: 
$$\lim_{n \to\infty} \frac{|T_n\cap Q|}{|T_n|}=1,$$
where $T_n$ consists of all cyclically reduced words in $F_k$ of length $n$.

Assume $\lim_{n \to\infty}P^{CR}_n(A)=p$. If we consider only words in $Q$, the limit clearly stays the same. Each word in $D_r$ represents several words in $Q\cup T_r$. The maximum number of words which may be relators of pairwise isomorphic 1-related groups is the number of relabeling automorphisms ($2^kk!$) times the number of cyclic shifts ($r$). There are words for which different cyclic shift are equal words after relabeling (these words are products of more than one copy of the same word in different alphabets), and there are words containing fewer than $k$  letters, but the probability of obtaining such a word tends to $0$ (exponentially) as the length grows. If we exclude such words from $Q\cap T_r$, then the set of groups given by relators in $Q\cap T_r$ contains groups isomorphic to those with a reltaor from $D_r$ (that is the set from the model IC), each with the same multiplicity. Therefore the limit probability in the model IC is equal to $p$ as in the model CR.
\end{proof}

Next we will need a modified version of The Donsker's invariance principle.
Denote by $C$ the space of all continuous function $f:[0,1]\to \R^k$ such that $f(0)=0$, equipped with the sup norm.

\begin{theorem}[Donsker's Theorem modified]\label{donsker} Consider a piecewise linear function $Y_n(t):[0,1]\to \R^k$, where the line segments are connecting points
$Y_n(t)=S_{nt}/\sqrt{n}$ for
$t=0,1/n,2/n$, $\dots,n/n=1$, where $(S_n)$ has a distribution according to $\P^{CR}_n$.
Then $Y_n(t)$ converges in distribution to a Brownian motion, as $n \to \infty$.
\end{theorem}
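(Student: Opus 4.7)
The plan is to prove convergence in two stages: first for the non-backtracking measure $P^{NB}_n$, which is governed by a finite-state Markov chain on the step directions, and then transfer to $P^{CR}_n$ by showing that the additional endpoint constraint is asymptotically negligible.

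First I would view the increments of a non-backtracking walk as a Markov chain on the set of directions $E = \{\pm e_1, \ldots, \pm e_k\}$: from state $v \in E$ the chain moves uniformly to any state in $E \setminus \{-v\}$. This chain is irreducible and aperiodic (each state has $2k-1 \ge 3$ successors) and the obvious hyperoctahedral symmetries of $E$ force the stationary distribution to be uniform; in particular the step vector has mean zero under stationarity. The functional central limit theorem for additive functionals of uniformly ergodic finite-state Markov chains (via Gordin's martingale decomposition, or Billingsley's Markov-chain version of Donsker's theorem) then applies, and the piecewise linear interpolation $Y_n$ under $P^{NB}_n$ converges in $C$ to $\sigma B$ for some $\sigma>0$ and a standard $k$-dimensional Brownian motion $B$. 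The same symmetries force the limiting covariance to be a scalar multiple of the identity, so the limit is isotropic.

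Second I would observe that $P^{CR}_n$ is obtained from $P^{NB}_n$ by conditioning on the event $A_n = \{X_n \ne -X_1\}$, where $X_i$ denotes the $i$-th step. Exponential mixing of the chain gives $P^{NB}_n(A_n) \to (2k-1)/(2k)$, so the conditional law is well-defined. To conclude that $Y_n$ under $P^{CR}_n$ still converges to $\sigma B$, it suffices, for every bounded continuous $F : C \to \R$, to verify
\[
E^{NB}_n[F(Y_n)\, 1_{A_n}] \;=\; P^{NB}_n(A_n)\, E^{NB}_n[F(Y_n)] + o(1).
\]
Conditioning on the pair $(X_1, X_n)$ reduces this to the statement that the Markov bridge from $u$ to $v$ of length $n$ agrees with the stationary free chain outside an initial and a terminal segment of length $O(\log n)$, which is a consequence of Doeblin-type mixing for any finite irreducible aperiodic chain. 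The boundary segments contribute only $O(\sqrt{\log n})$ to the unrescaled walk, hence $o(1)$ after dividing by $\sqrt n$.

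The main obstacle will be making the bridge-versus-free comparison quantitative: one needs a coupling of the Markov bridge with the stationary chain whose sup-norm discrepancy over $[0,1]$ is $o(\sqrt n)$ uniformly in the endpoint pair $(u,v)$. Once such a coupling is available, the continuous mapping theorem transfers the invariance principle from the unconditioned NB walk to the CR walk, and Theorem \ref{donsker} follows.
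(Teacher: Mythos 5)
Your proposal is correct but routes the first stage through different machinery than the paper. The paper follows Billingsley's original proof of Donsker's theorem step by step: it cites Rivin's central limit theorem for non-backtracking walks to obtain convergence of finite-dimensional distributions (with the explicit isotropic covariance), and then proves a maximal inequality by hand to establish tightness. You instead invoke the general functional central limit theorem for additive functionals of uniformly ergodic finite-state Markov chains, observing that the step sequence of a non-backtracking walk is exactly such a chain on the $2k$ directions and that hyperoctahedral symmetry forces an isotropic limit; this outsources the tightness estimate to a black-box theorem where the paper builds it directly, but both routes are legitimate. For the second stage, transferring from $P^{NB}_n$ to $P^{CR}_n$, the two arguments are in spirit the same: the paper cuts off the first $O(\log n)$ steps and argues that the remaining walk is asymptotically independent of the first step, while you condition on the endpoint pair $(X_1,X_n)$ and compare the resulting Markov bridge with the free chain via Doeblin mixing. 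The common core is exponential mixing of the step chain plus the observation that an $O(\log n)$ boundary segment contributes only $O(\sqrt{\log n})$ to the unrescaled walk, hence $o(1)$ after dividing by $\sqrt n$. Your version is somewhat more explicit about precisely what coupling needs to be constructed, which is a point in its favor.
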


\begin{proof} First we prove that conditioning on the first step of non-backtracking random walk has asymptotically no influence on $Y_n(t)$, which allows us to switch between $P^{NB}$ and $P^{CR}$. Next, we basically repeat the proof of the Donsker's Theorem in \cite[Theorem 10.1]{Billingsley}. The Central Limit Theorem for non-backtracking walks that we will use was proved in \cite{Rivin}.

Let $(R_n)$ be a non-backtracking random walk. We cut the walk at time $\ln(n)$, splitting the walk into two (dependent) parts $\left(R^{(1)}_{\ln(n)}\right)$ and $\left(R^{(2)}_{n-\ln(n)}\right)$. Define piecewise linear functions $X(t)$ and $Z(t)$ connecting points $X(t)=R_{nt}/\sqrt{n}$ and $Z(t)=R^{(2)}_{nt}/\sqrt{n-\ln(n)}$ respectively. Clearly, the distance (in the sup norm) between $X(t)$ and $Z(t)$ goes to $0$, as $n \to\infty$. Moreover the latter part of the walk $\left(R^{(2)}_{n-\ln(n)}\right)$ tends to be independent of the first step of $(R_n)$, as $n\to \infty$. Therefore the  piecewise linear functions obtained from walks with measures $\P^{NB}$ and $\P^{CR}$ have the same limit in distribution.

Next we show that the finite-dimensional distribution of $Y_n(t)$ converges to the one of Brownian motion. By the result of Rivin \cite[Theorem 5.1]{Rivin} the probability distribution of $S_{n}/\sqrt{n}$ converges to a normal distribution on $\R^k$, whose mean is $0$ and covariance matrix is diagonal, with entries
\begin{align*}
\sigma^2=\frac{1}{\sqrt{2k-1}}\left[ 1+\left(\frac{c+1}{c-1}\right)^{1/2}\right],
\end{align*}
where $c=k/\sqrt{2k-1}$. By the previous paragraph this holds for  $R_{n}/\sqrt{n}$ as well.

Consider now the two-dimensional distribution, that is the position at two time points, $s<t$. It is enough to show that $S_{ns}/\sqrt{n}$ and $(S_{nt}-S_{ns})/\sqrt{n})$ are asymptotically independent (the normal distribution of each of them was already established). The first step of  $(S_{nt}-S_{ns})$ is not independent of $(S_{ns})$, but asymptotically the distribution of $(S_{nt}-S_{ns})/\sqrt{n}$ is independent of the first step. The convergence of finite-dimensional distribution for more time points can be proved in the same way.

It remains to show the tightness of the process. We refer ourselves to the proof in Billingsley \cite[Page 69]{Billingsley}, and here we prove only the lemma needed. The claim is:
\begin{align*}
\P^{NB}_n\left( \max_{i<n}|S_i|\geq\lambda\sigma\sqrt{n}\right)\leq
\P^{NB}_n\left(|S_n|\geq(\lambda-\sqrt{2})\sigma\sqrt{n} \right).
\end{align*}
In order to prove this, we define events $E_i=\left\{\max_{j<i}|S_j|<\lambda\sigma\sqrt{n}\leq|S_i|\right\}$.
Now we have:
\begin{align*}
\P^{NB}_n\left( \max_{i<n}|S_i|\geq\lambda\sigma\sqrt{n}\right)&\leq
\P^{NB}_n\left( |S_n|\geq(\lambda-\sqrt{2})\sigma\sqrt{n}\right)\\
&\quad +
\sum^{n-1}_{i=1} \P^{NB}_n\left( E_i \cap
 \left\{|S_n|<(\lambda-\sqrt{2})\sigma\sqrt{n}\right\}\right),
\\
\P^{NB}_n\left( E_i \cap
 \left\{|S_n|<(\lambda-\sqrt{2})\sigma\sqrt{n}\right\}\right)&\leq
 \P^{NB}_n\left( E_i \cap
 \left\{|S_n-S_i|\geq\sigma\sqrt{2n}\right\}\right)\\
 & =\P^{NB}_n( E_i) \P^{NB}_n
 \left(|S_n-S_i|\geq\sigma\sqrt{2n}\right).
\end{align*}
The last equality follows from the fact that the length of $S_n-S_i$ is independent of the walk up to the time $i$. Now by Chebyshev's inequality $\P^{NB}_n
 \left(|S_n-S_i|\geq\sigma\sqrt{2n}\right) \leq 1/2$. The claim follows from
 \begin{align*}
 \sum^{n-1}_{i=1} \P^{NB}_n\left( E_i \cap
 \left\{|S_n|<(\lambda-\sqrt{2})\sigma\sqrt{n}\right\}\right)&\leq
 \frac{1}{2}\sum^{n-1}_{i=1} \P^{NB}_n\left( E_i\right)\\
 &\leq\frac{1}{2} P^{NB}_n\left( \max_{i<n}|S_i|\geq\lambda\sigma\sqrt{n}\right).
 \end{align*}\end{proof}

\subsection{Corners of random walk}

Let $(S_n)$ be a non-backtracking random walk in $\Z^k$ of length $n$ with last edge that is not inverse of the first edge (according to the measure $P^{CR}_n$).
Recall that $\Delta_0$ is the projection of the boundary of the
convex hull of the random walk $(S_n)$ onto the hyperplane orthogonal to
$\xi$, the vector connecting the initial and the terminal points of
the random walk. Denote by $H_n$, the set of corners, which are the vertices
of the random walk that project to the $0$-cells of $\Delta_0$. We count the
corners with their multiplicities.

\begin{lemma} Let $(S_n)$ be a non-backtracking random walk in $\Z^k$ of length $n$ with last edge that is not inverse of the first edge (according to the measure $P^{CR}_n$).
Let $H_n$ be the set of its corners as defined above.
Then for any integer $m$
\begin{align*}
\P^{CR}_n(|H_n|<m)\to 0\textrm{ as }n\to \infty.
\end{align*}
\end{lemma}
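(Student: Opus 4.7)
The plan is to combine Donsker's invariance principle (Theorem \ref{donsker}) with the theorem of Cranston, Hsu and March \cite{Cranston} that the boundary of the convex hull of a Brownian trajectory in $\R^k$ is $C^1$ almost surely. I will first reduce the discrete statement to its Brownian counterpart, then use smoothness to produce many extreme points of the projected Brownian convex hull, and finally transfer these extreme points back to the walk by an argmax continuous-mapping argument.

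\emph{Step 1 (Invariance principle).} By Theorem \ref{donsker} and Skorohod's representation theorem, I can work on a common probability space on which the rescaled walks $Y_n(t)=S_{\lfloor nt\rfloor}/\sqrt n$ converge almost surely in the sup norm to a Brownian motion $B=(B_t)_{t\in[0,1]}$ in $\R^k$. Let $K_n$ and $K$ denote the convex hulls of $Y_n([0,1])$ and $B([0,1])$, and set $\xi_n=Y_n(1)$, $\xi=B_1$, which is nonzero almost surely. Uniform trajectory convergence then yields $K_n\to K$ in Hausdorff distance and $\xi_n\to\xi$ almost surely.

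\emph{Step 2 (Many extreme points from smoothness).} By \cite{Cranston}, $\partial K$ is $C^1$ almost surely, so $K$ is not a polytope and therefore has uncountably many extreme points. An elementary argument, using $C^1$-ness together with the almost sure injectivity of the Brownian path in dimension $k\ge 2$, then shows that the projection $\pi_\xi(K)$ onto $\xi^\perp$ also has infinitely many extreme points, each realised as the image of a unique point on the Brownian path at a unique time in $(0,1)$. Given $m$, I will choose unit vectors $u_1,\dots,u_m\in\xi^\perp$ in generic position so that the linear functionals $t\mapsto\langle u_i,B_t\rangle$ attain their maxima on $[0,1]$ at pairwise distinct and unique times $\tau_1,\dots,\tau_m\in(0,1)$; such choices exist outside a Lebesgue-null set of directions, since for each fixed $u$ the one-dimensional process $\langle u,B_t\rangle$ almost surely has a unique maximizer.

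\emph{Step 3 (Transfer to the walk).} For each $i$ let $u_i^{(n)}\in\xi_n^\perp$ be the unit vector obtained by projecting $u_i$ onto $\xi_n^\perp$ and normalizing; since $\xi_n\to\xi$, we have $u_i^{(n)}\to u_i$ almost surely. Let $j_i^{(n)}\in\{0,\dots,n\}$ be any index at which $j\mapsto\langle u_i^{(n)},S_j\rangle$ attains its maximum. Uniform convergence $Y_n\to B$ together with uniqueness of the Brownian argmax $\tau_i$ implies, via the standard continuous mapping principle for argmax functionals, that $j_i^{(n)}/n\to\tau_i$ and $S_{j_i^{(n)}}/\sqrt n\to B_{\tau_i}$. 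Because the $\tau_i$ are distinct, the indices $j_i^{(n)}$ are pairwise distinct for all sufficiently large $n$. Since $u_i^{(n)}\in\xi_n^\perp$, the vertex $S_{j_i^{(n)}}$ maximizes a functional orthogonal to $\xi_n$, so $\pi_{\xi_n}(S_{j_i^{(n)}})$ is a $0$-cell of $\Delta_0$. Hence the walk has at least $m$ distinct corners, giving $\P^{CR}_n(|H_n|\ge m)\to 1$.

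The main technical obstacle I expect is Step 3: managing the random projection direction $\xi_n$ and justifying the argmax convergence for the perturbed directions $u_i^{(n)}$. Once uniqueness of the limiting argmax times $\tau_i$ is established from the $C^1$ structure of $\partial K$, the transfer becomes a routine application of the continuous mapping theorem, but the uniformity in the perturbation $u_i^{(n)}\to u_i$, as well as the "elementary argument" promised in Step 2 that smoothness of $\partial K$ passes to extreme points of the projection, need to be verified carefully.
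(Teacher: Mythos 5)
Your overall strategy---Donsker's invariance principle plus the Cranston--Hsu--March smoothness theorem---is the right one and matches the paper's, but the way you transfer the result to the walk is different and there is one genuine gap.

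\textbf{The gap.} In Step 2 you assert that by \cite{Cranston} the boundary $\partial K$ of the convex hull of a Brownian path in $\R^k$ is $C^1$ almost surely. The cited result is only about \emph{planar} Brownian motion; it does not give $C^1$-smoothness of the $(k-1)$-dimensional boundary $\partial K$ for $k\ge 3$, and that higher-dimensional statement is not what the lemma needs anyway. The paper instead projects the path to a $2$-dimensional plane $\Pi\subset\xi^\perp$ and observes that if the planar projection's convex hull has a $C^1$ boundary (hence infinitely many extreme points), then the projection to the full hyperplane $\xi^\perp$ cannot be a polytope with at most $m$ vertices. A second point you need to handle explicitly is that the projection direction is $\xi=B_1$, which is random and correlated with the whole path; since the vectors in $\xi^\perp$ annihilate $B_1$, the process $\langle u,B_t\rangle$ for $u\in\xi^\perp$ is really a (one-dimensional projection of a) Brownian \emph{bridge}, and the paper reduces to planar Brownian bridges (and then to the meander result of Burdzy) rather than to planar Brownian motion. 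Your argument can probably be repaired by choosing the $u_i$ inside a fixed $2$-plane $\Pi\subset\xi^\perp$ and invoking the bridge/meander reduction, but as written the appeal to \cite{Cranston} in $\R^k$ is not justified.

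\textbf{The different route.} The paper avoids Skorohod representation and argmax continuity entirely. It defines $A_m\subset C$ to be the set of paths whose projected convex hull is a polytope with at most $m$ vertices, proves that $A_m$ is closed in the sup norm (a compactness argument on the $0$-cells of approximating polytopes), shows the Wiener measure of $A_m$ is zero (via the $2$-plane projection and bridge smoothness), and then concludes $\limsup_n\P(X_n\in A_m)\le \mu_{\mathrm{Wiener}}(A_m)=0$ by the portmanteau theorem for weak convergence. Your argmax-continuity route is more constructive, but it forces you to prove uniqueness of maximizers, control the random perturbation $u_i^{(n)}\to u_i$, and manage joint convergence across all $m$ directions; the portmanteau argument replaces all of this by a single topological step (closedness of $A_m$). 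If you want to keep your approach, you will need to spell out the uniqueness-of-argmax lemma and the uniform argmax continuity carefully; otherwise the paper's closed-set argument is cleaner and already contains the planar reduction you are missing.
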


\begin{proof}
Consider a piecewise linear function $X_n(t):[0,1]\to \R^k$, where the line segments are connecting points $X_n(t)=S_{nt}/\sqrt{n}$ for $t=0,1/n, 2/n,\dots,$ $n/n=1$.
Recall that $C$ is the space of all continuous function $f:[0,1]\to \R^k$ such that $f(0)=0$, equipped with the sup norm. By Theorem \ref{donsker}, $X_n(t)$ converges in distribution to a Brownian motion, as $n \to \infty$.
Denote by $A_m$ a subset of $C$ such that $f\in A_m$ if  the convex hull of the projection of $f$ to a hyperplane orthogonal to $f(1)$ is a $(k-1)$-dimensional (convex)  polytope with at most $m$ $0$-cells. We will show that the set $A_m$ is a closed subset of $C$ in the sup norm and that the Wiener measure of $A_m$ is zero. It follows from the weak convergence that $\P (X_n(t)\in A_m)\to 0$ as $n\to \infty$.

First show that $A_m$ is closed. Let $f\notin A_m$ be a limit (in the sup norm) of $f_n\in A_m$.
Let $p_n$ (resp. $p$) denotes a projection on a hyperplane at the origin orthogonal to $f_n(1)$ (resp. $f(1)$). For any $\epsilon>0$ and for all but finitely many $n$, we have $|p(f(t))-p(p_n(f_n(t)))|<\epsilon$  for all $t$. If the convex hull of $p_n(f_n)$ is a polytope with at most $m$ $0$-cells, then the same holds for $p(p_n(f_n))$.
Denote by $B_n$ (resp. $B$) the convex hull of $p(p_n(f_n))$ (resp. $p(f)$).
Then for any $\epsilon>0$ the boundary of $B$ is in Hausdorff $\epsilon$-neighborhood of the boundary of $B_n$ for all but finitely many $n$. We need to prove that if a convex body in $\R^k$ is
arbitrarily close to some polytope with at most $m$ $0$-cells, then the body itself is such a polytope.
To prove that, enumerate the $0$-cells of $B_n$ somehow $\{v_{n,1},...,v_{n,m}\}$ (the last few $0-$cells may coincide if the total number of $0-$cells is smaller than $m$). Choose one convergent subsequence $\{v_{n_j(i),i}\}_{j=1,2,...}$ of $\{v_{n,i}\}$ for each $i$ in such a way that the set $\{n_j(i+1), j=1,2,...\}$ is a subset of $\{n_j(i), j=1,2,...\}$. Let $N^{(0)}$ be the sequence $\{n_j(m), j=1,2,...\}$. There exists a subset $N^{(1)}$ of $N^{(0)}$ such that for every $i,j\in \{1,2,...,m\}$ either $v_{t,i}$ and $v_{t,j}$ span a 1-cell in all $B_t$, $t\in N^{(1)}$ or they don't span a 1-cell in all $B_t$, $t\in N^{(1)}$. Proceeding by induction on the dimension of a cell, we can find an infinite subset $N$ of natural numbers such that for every subset $M\subset \{1,2,...,m\}$ either vertices $v_{t,i}$, $i\in M$, span a cell in $B_t$ for all $t\in N$ or they span a cell in none of these $B_t$. For every $M\subseteq \{1,2,...,m\}$ such that $v_{t,i}$, $i\in M$, span a cell $F_M(t)$ of dimension $j$ in all $B_t, t\in N$, the limit $\lim_{t\in N} F_M(t)$
exists and is an Euclidean convex polytope of dimension $j$ spanned by the 0-cells $v_i, i\in M$. Hence the convex hull $B$ of $p(f)$ is a convex polytope with at most $m$ 0-cells.
To see that $A_m$ has measure $0$ we introduce the following set $D$ of continuous functions $[0,1]\to \R^k$. A function $f\in C$ is in $D$ if a convex hull of its projection to some $2$-dimensional plane orthogonal to $f(1)$ has a smooth boundary, i.e. it is a $C^1$ curve in the plane. Clearly $D\cap A_m$ is empty for all $m$.
Let $X_t$ be a standard Brownian motion in $\R^k$. Then $X(t)-t X(1)$ is a Brownian bridge in $\R^k$. All projections of this Brownian bridge to $\R^2$ are equivalent in distribution and give Brownian bridges in $\R^2$.
To conclude that $D$ has Wiener measure $1$, it is enough to show that the convex hull of a planar Brownian bridge has a smooth boundary almost surely.  For Brownian motions, that is proved in  \cite{Cranston}. We are going to use almost the same argument.

Consider a Brownian bridge and pick any of its extreme points. Move the beginning of the time from $0$ to this extreme point and rotate the plane so that the path is in the upper half plane. The obtained process $Y_t$ is a Brownian excursion, i.e. it stays in the upper half plane and return to the starting point. The same is true for $Y_{1-t}$. Then the transformation $V_t:=(1+t)Y_{t/(1+t)}$ is a Brownian meander (see for example \cite{Billingsley}, p.68, exercise 3).  Let $V_t=(V_t(1),V_t(2))$, by \cite{Burdzy}, for any $c>0$, we have
\begin{align*}
\P(\inf\{t:t>0,|V_t(2)|\leq c|V_t(1)|\}=0)=1.
\end{align*}
By reversing the transformation we obtain the same property for $Y_t$ and $Y_{1-t}$. Now the claim follows using the argument from Theorem 1 in \cite{Cranston}.
\end{proof}

We say that a random walk is \emph{bad} if there is no $0$-cell of $\Delta_0$ such that only a single vertex 
is projected to it.

\begin{lemma}\label{corners}
The probability that a $k$-dimensional non-backtracking simple random walk (with last edge that is not inverse of the first edge of the walk) is bad in the above sense tends to $0$, for $k>2$.
\end{lemma}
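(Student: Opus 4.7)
The plan is to leverage Donsker's Theorem (Theorem \ref{donsker}) and the uniqueness of the maximum of a one-dimensional Brownian bridge to rule out the bad event. Fix a direction $u$ in $\R^k$ orthogonal to the random endvector $\xi=S_n$, chosen by measurable selection (e.g.\ project a fixed generic reference direction onto $\xi^\perp$ and normalize, conditioning on the event $\xi\neq 0$, which has probability tending to $1$). Two vertices $S_i,S_j$ project to the same point of $\Delta_0$ iff $S_i-S_j=m\xi$ for some nonzero integer $m$; in that case $u\cdot S_i=u\cdot S_j$. If the walk is bad, then in particular the $0$-cell of $\Delta_0$ extremal in direction $u$ has at least two vertex preimages, so the functional $i\mapsto u\cdot S_i$ attains its maximum at two distinct indices. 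It therefore suffices to show that the probability of this event tends to zero.

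By Theorem \ref{donsker}, the rescaled path $X_n(t)=S_{nt}/\sqrt{n}$ converges in distribution in $C([0,1],\R^k)$ to a Brownian motion $B$, so the scalar process $t\mapsto u\cdot X_n(t)$ converges to $u\cdot B(t)$, which, conditional on $B(1)$ (so that $u\perp B(1)$), is a nondegenerate one-dimensional Brownian bridge of positive variance. A classical fact is that the maximum of a one-dimensional Brownian bridge on $[0,1]$ is attained at a unique time almost surely.

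To transfer uniqueness from the continuous to the discrete setting, we need that any two discrete maximizers are separated at the Brownian scale. For indices $i\neq j$ with $\pi(S_i)=\pi(S_j)$ we have $|S_i-S_j|=|m|\cdot|\xi|\ge|\xi|$, hence $|X_n(i/n)-X_n(j/n)|\ge |\xi|/\sqrt{n}$, a quantity that converges in distribution to $|B(1)|>0$. Combined with Brownian H\"older continuity in the limit, this forces $|i-j|/n$ to remain bounded away from $0$, so the two putative maximizers lie at macroscopic Brownian-scale separation. The continuous mapping theorem applied to the arg-max functional, which is continuous on the full-Wiener-measure set of paths with unique maximum, then yields that the probability of the discrete maximum being achieved at two macroscopically separated indices tends to $0$. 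Consequently $\P^{CR}_n(\text{bad})\to 0$.

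The main obstacle is making the non-continuous pieces rigorous: the measurable selection of $u$ depending on the random $\xi$, the discontinuity of the arg-max functional (controlled via its continuity on the full-measure set of bridges with unique argmax), and a uniform tightness estimate guaranteeing that with high probability no pair of indices $i\neq j$ with $|i-j|/n<\delta$ satisfies $\pi(S_i)=\pi(S_j)$. This last estimate essentially says that the non-backtracking walk does not return quickly to the $\xi$-translation class of any vertex, and follows from the same CLT for non-backtracking walks invoked in the proof of Theorem \ref{donsker}.
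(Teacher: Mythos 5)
Your approach is genuinely different from the paper's. The paper proves this lemma in two steps: first it shows (via Donsker's theorem together with the Cranston--Hsu--March smoothness of the convex hull of planar Brownian motion) that the number of corners $|H_n|$ tends to infinity in probability, and then it runs a purely combinatorial injection argument, inserting a commutator at each corner of a bad walk to produce many distinct cyclically reduced words of length $n+4$, which bounds $|B_n|/|T_n|$ by $\P(|H_n|<K)+(2d-1)^4/K$. You instead fix a single (measurably chosen) direction $u\perp\xi$ and reduce badness to non-uniqueness of $\arg\max_i(u\cdot S_i)$, invoking only the one-dimensional fact that a Brownian bridge has a unique maximizer almost surely. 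This avoids both the 2-dimensional smooth-boundary theorem and the commutator-insertion count, so it is in principle a more economical route, and it also produces a sharper conclusion: a specific $0$-cell (the $u$-extremal one) is simple with high probability, not merely \emph{some} $0$-cell.

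There is, however, a genuine gap in the separation step. You write that two preimages $S_i\ne S_j$ of the same $0$-cell satisfy $S_i-S_j=m\xi$ with $m$ a \emph{nonzero integer}, hence $|S_i-S_j|\ge|\xi|$. But $S_i-S_j$ and $\xi$ are merely two integer vectors that are parallel; the correct conclusion is that $m$ is a nonzero rational, and the sharp lower bound is $|S_i-S_j|\ge|\xi|/\gcd(\xi_1,\dots,\xi_k)$, which can be as small as $1$ (for instance when $\xi$ is a multiple of a coordinate vector). So $|X_n(i/n)-X_n(j/n)|$ need not be bounded below by a quantity of order $|B(1)|$, and the H\"older-continuity argument forcing $|i-j|/n$ away from $0$ does not apply as stated. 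The gap is fixable -- one has to add the observation that $\P\bigl(\gcd(\xi)>C\bigr)\to 0$ as $C\to\infty$ uniformly in $n$, and run the argument on the complement of that event -- but as written the key quantitative separation is unjustified. A smaller point in the same vein: you implicitly assume $S_i\ne S_j$ as lattice points; repeated visits ($S_i=S_j$ with $i\ne j$) also count as multiple preimages under the paper's ``with multiplicities'' convention and must be excluded by a separate (easy, but needed) low-probability estimate.
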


\begin{proof}
Let $(S_n)$ be a $k$-dimensional non-backtracking simple random walk (with last edge that is not inverse of the first edge of the walk), $k>2$.  The number of all cyclically reduced walks, $|T_n|$, equals asymptotically $(2d-1)^{n}$.  Let $B_n$ be a set of all ``bad'' walks, i.e. for all $0$-cells of $\Delta_0$ we have at least two vertices projected to it.

Define a map $\tau_i:B_{n}\to T_{n+4}$ that inserts a commutator at an $i$-th corner of the random walk in such a way that it produces a new corner. For example, if the corner is between letters $x_ix_j$, we can insert $x_ix_j^{-1}x_i^{-1}x_j$ in between them, so that the second vertex of these three new vertices projects outside of $\Delta_0$ of the original walk. Note that the new walk is not bad anymore.

This map $\tau_i$ is injective. Moreover images of the same walk under $\tau_i$ for different $i$ are disjoint.
The set of bad walks with more than $K$ corners, $U_K:=\{w| w\in B_n, H_n(w)>K\}$, is mapped by $\tau_1, \tau_2,\dots \tau_K$ into $T_{n+4}$. The union of their images $\bigcup_{i=1}^K\tau_i(U_K)$ is of size $K|U_K|$.

For any integer $K$ we can write:
\begin{align*}
|B_{n}|&=|B_n\setminus U|+|U|\\
&\leq \P(H_n<K)|T_{n}|+\frac{|T_{n+4}|}{K}\\
\frac{|B_{n}|}{|T_{n}|}&\leq \P(H_n<K) + \frac{1}{K}\frac{|T_{n+4}|}{|T_{n}|}\\
\frac{|B_{n}|}{|T_{n}|}&\leq\P(H_n<K)+\frac{(2d-1)^4}{K} \to \frac{(2d-1)^4}{K}
\end{align*}
This holds for $K$ arbitrarily large. The first summand tends to $0$, as $n\to\infty$, by Lemma \ref{corners}. It implies that the probability of a bad walk is less than any positive number.
\end{proof}

\begin{remark} Using Lemma \ref{lem2}, the same result as in Lemma \ref{corners} follows for a simple random walk (we consider corners of its reduced form).
\end{remark}

\begin{proof}[Proof of Theorem \ref{thm3}]
Let $G = \langle x_1,x_2,\dots,x_k | R = 1\rangle$ be a random $k$-generator
$1$-relator group, $k>2$.
If the trace of $R$ is not bad in the above sense, then there is a hyperplane touching its cyclic reduction.
Thus, by Lemma \ref{corners}, there is a hyperplane touching $R$ with probability tending to $1$. By Lemma \ref{lm4}, this implies that the group can be embedded into an ascending HNN extension of a free group almost surely.
\end{proof}

\section{What if the number of generators is 2? Some open questions}

Theorem \ref{thm3} leaves the case of 1-related groups with 2 generators open. The reason is the following. 
In \cite{BS}, Borisov and Sapir reported a result of computations
saying that, apparently, more than 94\% of 1-related groups with 2
generators and a relator of size $n>>1$ are ascending HNN extensions
of free groups .  Borisov and Sapir used the Monte-Carlo method for
$n\approx 10^6$. Schupp and later Dunfield and Thurston \cite{DT}
conducted  similar experiments on their own and came to the same
conclusion. Thus in some sense we know (although it is not proved yet) that probably majority of 1-related groups with 2 generators are residually finite and coherent. At the same time, 
Dunfield and Thurston noticed \cite{DT} that a 2-generated 1-related group is
not almost surely an ascending HNN extension of a free group (that
is the probability that a 2-generated 1-related group with a relator
of size $n$ is an ascending HNN extension of a free group is bounded
away from 1 as $n\to \infty$).

Nevertheless the answer to the following question can be positive

\begin{question}\label{q1} Is it true that the probability that a 2-generated 1-related group is residually finite is 1?
\end{question}

Answering a question of M. Sapir, Fu and Virag proved (see \cite{Fu}) that a 2-generated  1-related group is almost surely a very special HNN extension of a free group. That HNN extension is determined by three parameters: integer $k$ (the rank of the free group), integer $i$ between 1 and $k$, and a word $w$ from the free group $F_k$. It is given by the following presentation:
\begin{align*}H(k,i,w) =  & \langle a_1,...,a_k, t\mid
ta_1t^{-1} = a_2,...,ta_{i-1}t^{-1}=a_i, ta_it^{-1}= w, twt^{-1} = a_{i+1},\\
&  ta_{i+1}t^{-1} = t_{i+2}, ..., ta_{k-1}t^{-1} = a_k\rangle.
\end{align*}

\begin{question}\label{q2} Is every group $H(k,i,w)$ residually finite?
\end{question}

By \cite{Fu}, positive answer to Question \ref{q2} implies positive answer to Question \ref{q1}.

\def\cprime{$'$}

\comment

\begin{minipage}[t]{3 in}
\noindent Mark V. Sapir\\ Department of Mathematics\\
Vanderbilt University\\
m.sapir@vanderbilt.edu\\
www.math.vanderbilt.edu/$\sim$msapir\\
\end{minipage}
\begin{minipage}[t]{2.4 in}
\noindent Iva \v Spakulov\' a\\ Department of Mathematics\\
Vanderbilt University\\
iva.spakulova@gmail.com\\
\end{minipage}
\endcomment

\end{document}